\documentclass{amsart}

\usepackage{color,amssymb,amsfonts,amsmath,amsthm,amscd}
\usepackage{pdfsync}

\usepackage{hyperref}
\hypersetup{
    colorlinks=true,       % false: boxed links; true: colored links
    linkcolor=blue,          % color of internal links
    citecolor=blue,        % color of links to bibliography
    filecolor=blue,      % color of file links
    urlcolor=blue           % color of external links
}

%\usepackage{tikz}
%\usetikzlibrary{calc}
%\usetikzlibrary{fadings,arrows}
\usepackage[arrow,all,cmtip,matrix, curve]{xy}

%%%%%%%%%%%%%%%% simbolos            
\newcommand{\G}[2]{\ensuremath{\mathbb{G}(#1, #2)}}
\newcommand{\HC}[4]{\ensuremath{H_{#1}C^{#2}(#4_1,...,#4_{#3})}}
 \newcommand{\C}[3]{\ensuremath{C^{#1}(#3_1,...,#3_{#2})}}
\newcommand{\PGL}[2]{\ensuremath{\mathrm{PGL}(#1, #2)}}

\newcommand{\mult}[2]{\ensuremath{\mathrm{mult_{\mathit{p}}}(#1\cap #2)}}

\newcommand{\osc}[2]{\ensuremath{\mathit{P}(#1, #2)}}

%%%%%%%%%%%%%%%%%%%%theoremdefinitions

\newtheorem{theorem}{Theorem}
\newtheorem{lemma}[theorem]{Lemma}
\newtheorem{proposition}[theorem]{Proposition}
\newtheorem{example}[theorem]{Example}

\newtheorem{corollary}[theorem]{Corollary}
\newtheorem{remark}[theorem]{Remark}

%%%%%%%%%%%%%%%%%%%%%%%%%%%%

\begin{document}
\title[Weierstrass weight of the hyperosculating points]{Weierstrass weight of the hyperosculating points of generalized Fermat curves}

\author[R. A. Hidalgo]{Rub\'en A. Hidalgo}
\address{Departamento de Matem\'atica y Estad\'istica, Universidad de la Frontera, Temuco, Chile}
\email{ruben.hidalgo@ufrontera.cl}

\author[M. Leyton-\'Alvarez]{Maximiliano Leyton-\'Alvarez}
\address{Instituto de Matem\'atica y F\'isica, Universidad de Talca, Talca, Chile}

\email{leyton@inst-mat.utalca.cl}

\thanks{The first author was partially supported by the projects
 Fondecyt 1150003 and Anillo ACT1415 PIA-CONICYT.
The second author was partially supported by project Fondecyt 1170743}
\maketitle

%%%%%%%%%%%%%%%%%

\begin{abstract}
Let $(S,H)$ be a generalized Fermat pair of the type $(k,n)$. If $F\subset S$ 
is the set of  fixed points of the non-trivial elements of the group $H$, then $F$ is exactly 
the set of hyperoscualting points of the standard embedding $S\hookrightarrow {\mathbb{P}}^{n}$. We provide an  optimal lower bound (this being sharp in
a dense open set  of  the moduli space of the generalized Fermat curves) for the Weierstrass weight of these points.
\end{abstract}

%%%%%%%%%%%%%%%%% 
%%%%%%%%%%%%%%%%%
\section{Introduction}
The geometry of hyperbolic closed Riemann surfaces can be described via different objects: complex 
projective algebraic curves,  Fuchsian and Schottky groups, Jacobian varieties, etc. Also, important to each 
Riemann surface is its group of conformal automorphisms as those with non-trivial group of conformal automorphisms 
determine the branch locus in the moduli space,
and in genus at least four that locus is also its topological singular locus \cite{Rauch}.
Correspondence between these different descriptions are, in the general situation, only existential; most of the known classic examples for which this is explicitly done are rigid, that is, they have no moduli (they correspond to those Riemann surfaces with a large group of conformal automorphisms).  Some correspondences are also known for the case of cyclic $n$-gonal curves (including the case of hyperelliptic Riemann surfaces).  Generating families of Riemann surfaces  where these different descriptions are concretely known is not an easy problem. 
Nevertheless, having them may help to understand the geometry of the moduli spaces of  curves.  In this paper we study an interesting family of non-hyperelliptic Riemann surfaces,  called generalized Fermat curves, where these different descriptions have been studied  and a good understanding of them have been achieved; see for instance \cite{CHQ15, FGHL13,GHL09,HKLP16}.

A closed Riemann surface $S$ is a called a generalized Fermat curve of the type $(k,n)$, where $k,n \geq 2$ are integers, if it admits a group $H \cong {\mathbb Z}_{k}^{n}$ of conformal automorphisms so that the quotient orbifold $S/H$ has genus zero and exactly $(n+1)$ conical points, each one necessarily of order $k$. In this case, the group $H$ (respectively, the pair $(S,H)$) is called a generalized Fermat group (respectively, a generalized Fermat pair) of type $(k,n)$. As a consequence of the Riemann-Hurwitz formula, it can be seen that $S$ has genus
$$g_{k,n}:=\frac{k^{n-1}((n-1)(k-1)-2)+2}{2}.$$

In \cite{FGHL13} it was proved that, for $n=3$ and $k \geq 3$,  a generalized Fermat curve of the type $(k,n)$ has a unique generalized Fermat group of type $(k,n)$ and later, in \cite{HKLP16}, as long $(k-1)(n-1)>2$ (equivalently, $g_{k,n}>1$), this uniqueness property was proved to be true in general. 
 This fact asserts that 
the moduli space of generalized Fermat curves of the type $(k,n)$ can be identified with 
the moduli space of  orbifolds of genus zero with $(n+1)$ cone points, each one of order $k$. In particular, 
generalized Fermat curves of type $(k,n)$ provide a $(n-2)$ complex dimensional family inside
the moduli space of surfaces of genus $g_{k,n}$ (those of type  $(k,2)$ are exactly the classic 
Fermat curves of degree $k$). Also, this facilitates the computation of  the extra automorphisms of
a generalized Fermat curve (see the proof of Corollary 9 of \cite{GHL09}).

Let $(S,H)$ be a generalized Fermat pair of type $(k,n)$, where $(k-1)(n-1)>2$. A representation of $S$ as an algebraic curve and an uniformizing Fuchsian group was provided in \cite{GHL09} and, for $k$ a prime integer, an isogenous decomposition of its Jacobian variety was obtained in \cite{CHQ15}. In fact, 
if the orbifold $S/H$ is uniformized by the Fuchsian group $\Gamma \cong\langle x_1,...,x_{n+1}: x_1^k=\cdots x_{n+1}^k=x_1\cdots x_{n+1}=1\rangle$, then 
its derived subgroup $\Gamma'$ is torsion free, it uniformizes $S$ and 
$H$ corresponds to $\Gamma/\Gamma'$. This, in particular, asserts that $S$ is a maximal Abelian branched covering space of the orbifold $S/H$. 
Let $\rho:S \to \widehat{\mathbb C}$ be a  regular branched cover whose deck group is $H$. Up to post-composition 
by a suitable M\"obius transformation, we may assume the branch values of $\rho$ to be given by 
$\infty$, $0$, $1$, $\lambda_{1},\ldots, \lambda_{n-2}$. Then, $(S,H)$ is isomorphic to the pair
$(\C{k}{n}{\lambda},H)$  (by abuse of notation we use $H$ in both contexts), where 
\begin{equation}\label{star}
 \C{k}{n-2}{\lambda}:=\left \{ \begin{array}{rcc}
              x_0^k+x_1^k+x_2^k&=&0\\
              \lambda_1x_0^k+x_1^k+x_3^k&=&0\\
              \vdots \hspace{1cm}&\vdots &\vdots\\
              \lambda_{n-2}x_0^k+x_1^k+x_n^k&=&0\\
             \end{array}\right  \}\subset {\mathbb{P}}^n,  
\end{equation}
and $H$ is generated by the restrictions of the linear transformations
\begin{center}
 $\varphi_j([x_0:\cdots:x_j:\cdots:x_n]):=[x_0:\cdots:w_kx_j:\cdots:x_n]$, where $w_k:={\mathrm{e}}^{\frac{2\pi i}{k}}$.
\end{center}

The set of fixed points of  $\varphi_j$ in $\C{k}{n-2}{\lambda}$ is ${\mathrm{Fix}}(\varphi_j):=F_j \cap \C{k}{n-2}{\lambda}$,
where $F_j$ is the hyperplane $\{x_j:=0\}\subset {\mathbb{P}}^n$. 
Set $F:=\cup_{j=0}^n{\mathrm{Fix}}(\varphi_j)={\mathrm{Fix}}\;H$.
In this algebraic model, $\rho([x_0,...,x_n])=-(x_{1}/x_{0})^{k}$. The above produces an analytic embedding 
$S \hookrightarrow  \C{k}{n-2}{\lambda}  \subset {\mathbb{P}}^n$, called the standard embedding of the 
generalized Fermat curve $S$.
In \cite{HKLP16} it  was observed that the set hyperosculating points 
of such standard embeddings is $F$; in particular, these are Weierstrass points of $S$. The Weierstrass points are important in the geometry of Riemann surfaces,  and in general the determination of all these  points  together their respective weights remains a difficult problem, including for classically known curves. In the case of the classic Fermat curves (that is, $n=2$), in 1950 Hasse \cite{Has50} computed the Weierstrass weight of the 
hyperosculating points.    Leopoldt observed that for $k\geq 5$ the points $[1:\alpha: \sqrt[k]{2}\beta],
[1:\sqrt[k]{2}\beta:\alpha]$ and  $ [\sqrt[k]{2}:\beta: \alpha\beta]$, were $\alpha$ (resp. $\beta$) is a $k$-th root of $1$ (resp. $-1$) are $3k^2$ 
new Weierstrass points of the Fermat curve (for more information see Rohrlich's article \cite{Roh82}).  
 In 1999 Watanabe \cite{Wat99} showed that in the case $k=6$ additional Weierstrass points exist.
The Weierstrass weight of points fixed by involutions in the case $k \in \{9,10\}$ was obtained by 
Towse in \cite{Tow00}.

In this work we study the  Weierstrass weight of the hyperosculting points of the standard embedding of generalized Fermat curves of type $(k,n)$ when $(k-1)(n-1)>2$. We provide an  optimal lower bound 
(this being sharp in a dense open set  of  the moduli space of the generalized Fermat curves) for the Weierstrass weight of these points.

%%%%%%%%%%%%%%%%%%%%%%%%
%%%%%%%%%%%%%%%%%%%%%%%%
\section{Preliminaries}\label{sec:hosc}

%%%%%%%%%%%%%%%%%%%%%%%%%
\subsection{Moduli of generalized Fermat curves}\label{sec:pre-mod-esp}
Let $\mathcal{F}(k,n)$ be the locus, in the moduli space ${\mathcal M}_{g_{k,n}}$ of curves of genus $g_{k,n}$, formed by all the (classes) of generalized Fermat curves of  type $(k,n)$.  The space $\mathcal{F}(k,n)$ is isomorphic to the moduli space $\mathcal{M}_{0,n+1}$ of the unordered $(n+1)$ punctured sphere (see section $4.2$ of \cite{GHL09}).  
Let us consider the affine variety (in fact, a domain in ${\mathbb C}^{n-2}$)
$$\mathcal{P}_n:=\{(\lambda_1,...,\lambda_{n-2})\in
\mbox{(${\mathbb{C}}-\{0,1\}$)}^{n-2}\mid \lambda_i\neq \lambda_j\} \subset {\mathbb C}^{n-2}.$$

To each $(\lambda_1,...,\lambda_{n-2})\in \mathcal{P}_n$ we associate the $(n+1)$-tuple $(\gamma_1,\gamma_2,\gamma_3,\gamma_4,...,\gamma_{n+1})=(\infty, 0,1,\lambda_1,...,\lambda_{n-2})$. Given an element $\sigma \in \mathfrak{S}_{n+1}$, 
 the permutation group of $n+1$ elements, we can form the $(n+1)$-tuple  
  $(\gamma_{\sigma^{-1}(1)},\gamma_{\sigma^{-1}(2)},...,\gamma_{\sigma^{-1}(n+1)})$. 
Let  $T_{\sigma}\in {\rm PSL}(2,\mathbb{C})$  be the unique M\"obius Transformation satisfying $T_{\sigma}(\gamma_{\sigma^{-1}(1)})=\infty$, 
$T_{\sigma}(\gamma_{\sigma^{-1}(2)})=0$, and $T_{\sigma}(\gamma_{\sigma^{-1}(3)})=1$, and set
$$\sigma \cdot (\lambda_{1},\ldots, \lambda_{n-2})=(T_{\sigma} (\gamma_{\sigma^{-1}(4)}),...,T_{\sigma} (\gamma_{\sigma^{-1}(n+1)}) \in {\mathcal P}_{n}.$$

The above provides an action of ${\mathfrak S}_{n+1}$ as a group of holomorphic automorphisms on ${\mathcal P}_{n}$
$$
\mathfrak{S}_{n+1}\times \mathcal{P}_n\rightarrow  \mathcal{P}_n; (\lambda_{1},\ldots, \lambda_{n-2})\mapsto 
\sigma \cdot (\lambda_{1},\ldots, \lambda_{n-2}).
$$

The above action is faithful for $n \geq 4$. For $n=3$ there is a subgroup isomorphic to ${\mathbb Z}_{2}^{2}$ acting trivially on ${\mathcal P}_{3}$.
In \cite{GHL09} it was observed that two generalized Fermat curves $\C{k}{n-2}{\lambda}$ and $\C{k}{n-2}{\mu}$ are isomorphic if and only if $(\lambda_{1},\ldots,\lambda_{n-2})$ and $(\mu_{1},\ldots,\mu_{n-2})$ are in the same ${\mathfrak S}_{n+1}$-orbit. This, in particular, 
permits us to realize the moduli space $\mathcal{F}(k,n)$ as a geometric quotient 
$\mathcal{P}_n/\mathfrak{S}_{n+1}$, that is, as a  complex (affine) variety and that the canonical projection
map $\Pi:  \mathcal{P}_n\to  \mathcal{P}_n /{\mathfrak S}_{n+1}$ is an open morphism.

%%%%%%%%%%%%%%%%%%%%%
\subsection{Hyperosculating points}
Next, we will briefly review the general theory of the Pl\"ucker 
formulas in the case of smooth curves. The purpose of this is not to review extensively the theory,
but to present a self contained overview of the parts of the theory relevant to us. 
All the results presented in this section can be found in \cite{GrHa94}.
Let $C \subset {\mathbb{P}}^n$ be a projective smooth curve. For a $l$-plane
$P\subset {\mathbb{P}}^{n}$, $1\leq l\leq n-1$, the  multiplicity of $P$ in $p$ is
\begin{center}
$\mult{P}{C}:=\mbox{Order of contact of  $P$ and $C$ in $p$}$. 
\end{center}

It is known that, for each $p \in C$, there exists a unique $l$-plane, called the {\it osculating $l$-plane} and denoted by $\osc{l}{p}$, such that 
$\mult{\osc{l}{p}}{C}\geq l+1$, and that there exists at most a finite number of points $p\in C$ such that
 $\mult{\osc{l}{p}}{C}> l+1$. We say that $p\in C$ is  a hyperosculating point if 
$$\mult{\osc{n-1}{p}}{C}>n.$$

\begin{remark}
If $C$ is a non hyperelliptic curve of genus $g \geq 3$, then the hyperosculating points of the canonical embedding 
of $C \hookrightarrow {\mathbb P}^{g-1}$ are exactly its  Weierstrass points.
\end{remark}

As the $l$-planes of ${\mathbb{P}}^n$ are in bijective correspondence with the 
dimension $(l+1)$ vector subspaces of ${\mathbb{C}}^{n+1}$, we can define the functions
$$f_l:C\rightarrow \G{l+1}{n+1}; p\mapsto \osc{l}{p},$$
where $\G{l+1}{n+1}$ is the corresponding Grasmannian manifold.

Let $f_0:C\rightarrow {\mathbb{P}}^n$  be the natural embedding defined by the inclusion  $C\subset {\mathbb{P}}^n$,
and let us consider a  local chart $z:U\subset C \rightarrow W \subset {\mathbb{C}} $, $z(p)=0$, 
around the point $p\in C$. Then there exists a neighborhood $W'\subset W$ of $0$,
and a holomorphic vectorial function 
$$v:W'\rightarrow {\mathbb{C}}^{n+1}\backslash\{0\}:\;z 
\mapsto v(z):=(v_0(z),v_1(z),...,v_n(z)),$$
such that  
$$
f_0(z)=[v_0(z):v_1(z):\cdots:v_n(z)],  \quad \mbox{for all  $z\in W'$.}
$$

Let us consider the holomorphic vectorial function
$$w:W'\rightarrow \wedge^{l+1}{\mathbb{C}}^{n+1}:\; z\mapsto 
w(z):=v(z)\wedge v'(z)\wedge \cdots \wedge v^{(l)}(z).$$

There exists an integer  $m\geq 0$ such that $w(z)/z^{m}$ 
is a holomorphic vectorial function which does not vanish in a neighborhood $W''$ of $z=0$. 
By abuse of notation, we may say that $[w(z)]\in {\mathbb{P}}(\wedge^{l+1}{\mathbb{C}}^{n+1})$ for all  $z\in W''$.

Using the Pl\"ucker coordinates, it is possible to see $\G{l+1}{n+1}$ as a subvariety of
${\mathbb{P}}(\wedge^{l+1}{\mathbb{C}}^{n+1})$ and that
$$
f_l(z)=[v(z)\wedge v^{'}(z)\wedge \cdots \wedge v^{(l)}(z) ], \quad \mbox{ for all $z\in W''$}.
$$

In particular, the  maps $f_l$ are holomorphic and independent of the parametrization $v(z)$ chosen.
The curves  $C_l:=f_l(C)$, $0\leq l\leq n-1$  are called the  associated curves of $C$. 
Let us define the following integers:

\begin{itemize}
\item  $b_l(p)$;  the ramification index of $f_l:C\rightarrow C_l$ in the point  $p\in C$.
\item  $b_l=\sum_{p\in C}b_l(p)$; the total ramification index of $f_l:C\rightarrow C_l$.
\item  $d_l$; the number  of osculating $l$-planes of $C$ which intersects a generic $(n-l-1)$-plane of ${\mathbb{P}}^{n}$. Observe that $d_0$ is simply the degree of the curve.
\end{itemize}
   
The following proposition establishes a relationship between the hyperosculating points of $C$ and the ramification indexes of the maps $f_l:C\rightarrow C_l$.
 
\begin{proposition}
The point $p\in C\subset {\mathbb{P}}^n$ is a hyperosculating point if and only if $\sum_{l=1}^{n-1}b_l(p)\geq 1$. 
\end{proposition}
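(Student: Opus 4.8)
The plan is to reduce the statement to local invariants at $p$ together with a telescoping identity. Fix a chart $z$ centered at $p$ and a nowhere vanishing holomorphic lift $v$ of $f_0$ as in the preliminaries, and for $0\le l\le n$ set $w_l(z):=v(z)\wedge v'(z)\wedge\cdots\wedge v^{(l)}(z)$, letting $k_l=k_l(p)\ge 0$ denote the integer (introduced just before the statement) for which $z^{-k_l}w_l(z)$ is holomorphic and nowhere zero near $0$; put $k_{-1}:=0$. Since $v(0)\neq 0$ one has $k_0=0$, and since $f_0$ is an embedding the vectors $v(0),v'(0)$ are linearly independent, so $k_1=0$; note that $w_n$ is the classical Wronskian and $k_n$ its order of vanishing at $0$.

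The heart of the argument consists of two facts taken from \cite{GrHa94}. First,
\begin{center}
$b_l(p)=k_{l-1}(p)-2k_l(p)+k_{l+1}(p)\qquad(1\le l\le n-1)$;
\end{center}
in particular each $b_l(p)\ge 0$. To see this one writes $f_l(z)=[\,z^{-k_l}w_l(z)\,]$ with $z^{-k_l}w_l(0)\neq 0$, so that $b_l(p)$ equals the order of vanishing at $z=0$ of $(z^{-k_l}w_l)\wedge(z^{-k_l}w_l)'$ in $\wedge^{2}\bigl(\wedge^{l+1}\CC^{n+1}\bigr)$; pulling out powers of $z$ rewrites this as $\mathrm{ord}_{z=0}(w_l\wedge w_l')-2k_l$, and since $w_l'=v\wedge v'\wedge\cdots\wedge v^{(l-1)}\wedge v^{(l+1)}$ (every other term in the derivative repeats a factor), the Plucker/Laplace relations identify $\mathrm{ord}_{z=0}(w_l\wedge w_l')$ with $k_{l-1}+k_{l+1}$, which is precisely the computation underlying the Plucker formulas. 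Second, the osculating hyperplane realizes the maximal order of contact of a hyperplane with $C$ at $p$, and that order equals $n+\bigl(k_n(p)-k_{n-1}(p)\bigr)$; hence $p$ is hyperosculating, i.e.\ $\mult{\osc{n-1}{p}}{C}>n$, exactly when $k_n(p)-k_{n-1}(p)\ge 1$. (When $k_{n-1}=0$ this is transparent: $\mult{\osc{n-1}{p}}{C}$ is the order of vanishing of $z\mapsto w_{n-1}(0)\wedge v(z)$, whose Taylor expansion has leading term $\tfrac{z^{n}}{n!}\,w_n(0)$, so the order is $n$ precisely when $w_n(0)\neq 0$; the general case follows from the description of the $k_l$ through the order sequence of hyperplane sections in \cite{GrHa94}.)

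Granting these, the proposition drops out of a one-line telescoping of the first identity, read as a discrete second difference:
\begin{center}
$\displaystyle\sum_{l=1}^{n-1}b_l(p)=\sum_{l=1}^{n-1}\bigl(k_{l-1}(p)-2k_l(p)+k_{l+1}(p)\bigr)=\bigl(k_n(p)-k_{n-1}(p)\bigr)-\bigl(k_1(p)-k_0(p)\bigr)=k_n(p)-k_{n-1}(p)$,
\end{center}
using $k_0(p)=k_1(p)=0$. Each summand $b_l(p)$ being nonnegative, $\sum_{l=1}^{n-1}b_l(p)\ge 1$ holds if and only if this quantity is positive, that is, iff $k_n(p)-k_{n-1}(p)\ge 1$, which by the second fact is exactly the condition that $p$ be hyperosculating. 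Both implications are thereby obtained simultaneously.

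The genuinely technical point is the first fact: extracting from \cite{GrHa94} the precise normalizations making the integers $k_l$ attached to the wedge powers of a local lift match the order sequence of hyperplane sections at $p$, so that $b_l(p)$ is their discrete Laplacian and the maximal contact order of a hyperplane at $p$ is $n+(k_n-k_{n-1})$. The only other thing to watch is the endpoint $l=0$: one has $b_0(p)=0$ because $f_0$ is an embedding of a smooth curve, so the order sequence of hyperplanes at $p$ begins $0,1,\dots$; this is why the statement sums from $l=1$ and why the telescoping produces no boundary term.
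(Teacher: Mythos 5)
Your argument is correct, but note that the paper offers no proof of this proposition at all: it appears among the preliminaries, which are quoted wholesale from \cite{GrHa94}. The route most directly suggested by the paper's own toolkit is shorter than yours: from the normal form $(\star)$ the order of contact of $\osc{n-1}{p}$ with $C$ at $p$ is $n+\alpha_1+\cdots+\alpha_n$, so $p$ is hyperosculating iff $\sum_{j=1}^{n}\alpha_j\geq 1$; combining Proposition~\ref{pr:ind-ram} ($b_l(p)=\alpha_{l+1}$) with $\alpha_1=0$ gives the statement in two lines. What you do instead is reconstruct the Griffiths--Harris argument intrinsically, through the vanishing orders $k_l$ of the wedges $w_l=v\wedge v'\wedge\cdots\wedge v^{(l)}$, the identity $b_l(p)=k_{l-1}-2k_l+k_{l+1}$, and a telescoping of this second difference; the two bookkeepings are reconciled by $k_l=\sum_{j=1}^{l}(l+1-j)\alpha_j$, whose discrete Laplacian is exactly $\alpha_{l+1}$, and $k_n-k_{n-1}=\sum_{j=1}^{n}\alpha_j$. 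Your version buys independence from the existence of the normal form, at the cost of importing two nontrivial facts from \cite{GrHa94}: that $\mathrm{ord}_{z=0}(w_l\wedge w_l')=k_{l-1}+k_{l+1}$ (the engine of the Pl\"ucker formulas), and that the maximal contact order of a hyperplane at $p$ is $n+(k_n-k_{n-1})$ --- your parenthetical only verifies the latter when $k_{n-1}=0$, and the general case genuinely needs the order-sequence description (or, circularly, the normal form). Since you flag both imports explicitly, the proof stands; just be aware that within this paper's setup the normal-form derivation is the economical one.
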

 
In the case of generalized Fermat curves, in \cite{HKLP16} (See Theorem \ref{th:hyperosc}) the ramification indexes were explicitly computed, which allows us to determine the hyperosculating points.  The following theorem will be useful for this purpose.

\begin{theorem}[\bf Pl\"ucker Formulas]
\label{th:plucker}
If $C\subset {\mathbb{P}}^n$ is  a curve of genus  $g$, then 
\begin{center}
$d_{l+1}-2d_l+d_{l-1}=2g-2-b_l$, for all $1\leq l\leq n-1$,
\end{center}
where $d_{-1}=d_n=0$.
\end{theorem}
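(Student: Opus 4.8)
The plan is to deduce the formula from two ingredients: a local analysis of the osculating flag of $C$ at each point, and a single global identity expressing $d_l$ in terms of $d_0=\deg C$, the genus $g$, and an effective divisor $R_l$ built from the vanishing of the Wronskian‐type vector $w(z)=v(z)\wedge v'(z)\wedge\cdots\wedge v^{(l)}(z)$ considered above. With both in hand, forming the alternating second difference $d_{l+1}-2d_l+d_{l-1}$ makes the $d_0$–term cancel, leaves coefficient $1$ in front of $2g-2$, and converts the $R$–terms into $-b_l$.

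For the local step I would fix $p\in C$ and choose a coordinate $z$ with $z(p)=0$ together with a basis of $\CC^{n+1}$ adapted to the osculating flag of $C$ at $p$, so that a local lift of $f_0$ has the form $v_i(z)=z^{\epsilon_i(p)}+(\text{higher order})$ for a strictly increasing integer sequence $0=\epsilon_0(p)<\epsilon_1(p)<\cdots<\epsilon_n(p)$, the order sequence of $C$ at $p$, with $\epsilon_i(p)=i$ for all but finitely many $p$. Writing $w_l(z)$ in Pl\"ucker coordinates, each component is a Wronskian determinant of the $v_{i_a}$ of order $\sum_{a=0}^{l}\epsilon_{i_a}(p)-\binom{l+1}{2}$ at $p$, the minimum being attained at the index set $(0,1,\dots,l)$; hence the vanishing order of $w_l$ at $p$ equals
\[
r_l(p)=\sum_{j=0}^{l}\bigl(\epsilon_j(p)-j\bigr).
\]
Passing to the non‑vanishing lift $\widetilde w_l(z)=z^{-r_l(p)}w_l(z)$ of $f_l$ and reading off its first non‑constant term, which comes from the second‑smallest index set $(0,\dots,l-1,l+1)$, one gets that the ramification index of $f_l\colon C\to C_l$ at $p$ is $b_l(p)=\epsilon_{l+1}(p)-\epsilon_l(p)-1$ (so $b_l(p)=0$ for generic $p$, in accordance with the Proposition above). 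Since $r_{l+1}(p)-r_l(p)=\epsilon_{l+1}(p)-(l+1)$, the two displayed facts yield the pointwise identity $b_l(p)=r_{l+1}(p)-2r_l(p)+r_{l-1}(p)$ for $1\le l\le n-1$.

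For the global step, set $L:=f_0^{*}\mathcal{O}_{\PP^n}(1)$ (so $\deg L=d_0$) and $R_l:=\sum_{p\in C}r_l(p)\,[p]$. Reading the construction of $f_l$ above bundle‑theoretically, the map $z\mapsto[w_l(z)]$ extends, after removing the base locus $R_l$, to $f_l\colon C\to\G{l+1}{n+1}$, and
\[
f_l^{*}\mathcal{O}(1)\;\cong\;\bigl(L^{\otimes(l+1)}\otimes K_C^{\otimes\binom{l+1}{2}}\bigr)\bigl(-R_l\bigr),
\]
the twist by $K_C^{\otimes\binom{l+1}{2}}$ being the determinant of the $l$‑jet bundle $J^l(L)$ (which carries a filtration with successive quotients $L,K_C\otimes L,\dots,K_C^{\otimes l}\otimes L$), and $w_l$ being, modulo $R_l$, the $(l+1)$‑st exterior power of the jet‑evaluation map $\CC^{n+1}\otimes\mathcal{O}_C\to J^l(L)$. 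Since $\G{l+1}{n+1}\subset\PP(\wedge^{l+1}\CC^{n+1})$ carries hyperplane class equal to the Pl\"ucker polarization, $d_l=\deg f_l^{*}\mathcal{O}(1)$, and taking degrees in the displayed isomorphism gives
\[
d_l+\deg R_l=(l+1)\,d_0+\binom{l+1}{2}\,(2g-2),\qquad 0\le l\le n,
\]
with $R_0=0$ and, for $l=n$, $d_n=0$ together with the classical identity that $\deg R_n$ is the total Weierstrass weight. Finally, subtracting twice the instance $l$ of this identity from the sum of the instances $l-1$ and $l+1$ kills the $d_0$–term, leaves coefficient $\binom{l}{2}-2\binom{l+1}{2}+\binom{l+2}{2}=1$ for $2g-2$, and by the local identity turns $\deg R_{l-1}-2\deg R_l+\deg R_{l+1}$ into $\sum_{p}b_l(p)=b_l$; this is the asserted formula, the conventions $d_{-1}=d_n=0$ being the cases $l=0$ and $l=n$ of the global identity. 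The step I expect to be the main obstacle is the precise identification of $f_l^{*}\mathcal{O}(1)$ in the global step — one needs $\det J^l(L)\cong L^{\otimes(l+1)}\otimes K_C^{\otimes\binom{l+1}{2}}$ and must check that the base locus of the top exterior power of the jet‑evaluation map is exactly $R_l$; the adapted‑frame normal form used in the local step is standard but also delicate. Both are carried out in \cite{GrHa94}.
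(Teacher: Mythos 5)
Your argument is correct, but there is nothing in the paper to compare it against: the paper states the Pl\"ucker formulas without proof, as part of a ``self contained overview'' of results quoted from \cite{GrHa94}. Judged on its own, your proof is the standard jet-bundle derivation and it holds together. The local step correctly identifies, via the generalized Vandermonde leading term of the Wronskians, that $\mathrm{ord}_p(w_l)=r_l(p)=\sum_{j=0}^{l}(\epsilon_j(p)-j)$ and that the ramification index is $b_l(p)=\epsilon_{l+1}(p)-\epsilon_l(p)-1$, which is consistent with Proposition \ref{pr:ind-ram} (with $\epsilon_i=i+\alpha_1+\cdots+\alpha_i$, so $b_l(p)=\alpha_{l+1}$), and the telescoping identity $b_l(p)=r_{l+1}(p)-2r_l(p)+r_{l-1}(p)$ is exact. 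The global identity $d_l=(l+1)d_0+\binom{l+1}{2}(2g-2)-\deg R_l$ follows from $\det J^l(L)\cong L^{\otimes(l+1)}\otimes K_C^{\otimes\binom{l+1}{2}}$ together with the fact that the Schubert condition of meeting a generic $(n-l-1)$-plane is a single Pl\"ucker hyperplane section, and the second difference then gives coefficient $0$ for $d_0$ and $1$ for $2g-2$, as you compute. Two harmless imprecisions: $\deg R_n$ is the total inflectionary weight of the given embedding, equal to the Weierstrass weight only for the canonical embedding; and $d_{-1}=0$ is a pure convention rather than an instance of the global identity (it is never needed in the stated range $1\le l\le n-1$). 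It is worth noting that the argument actually carried out in \cite{GrHa94}, to which the paper defers, is different in flavor: there one builds an explicit meromorphic $1$-form from contractions of $w_{l-1}$, $w_l$, $w_{l+1}$ against generic multivectors, computes its degree $2g-2$, and reads off $d_{l-1}+d_{l+1}-2d_l+b_l$ from its zeros and poles; your bundle-theoretic route packages the same local data more structurally and avoids choosing auxiliary generic planes, at the cost of invoking the filtration of the jet bundle.
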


%%%%%%%%%%%%%%%%%%%%%%%%
\subsection{A computational method}
We proceed to describe a method for computing $b_{l}(p)$.
Keeping the notations as above, let $z$ be a local chart around the point $p\in C$ and, in local charts,
$$f_0(z)=[v_0(z):v_1(z):\cdots:v_n(z)].$$

Making linear changes of coordinates, it is possible to prove that there exists $\varphi\in {\mathrm{Aut}}({\mathbb{P}}^n)\cong \PGL{n+1}{{\mathbb{C}}}$ 
such that
$$
\varphi(f_0(z))=[1: z^{1+\alpha_1}+\cdots: z^{2+\alpha_1+\alpha_2}+\cdots:\cdots :z^{n+\alpha_1+\cdots+\alpha_n}+\cdots].
$$

This is called the  normal  form of $f_0$ in $p$. By abuse of notation, we will identify  $\varphi(f_0(z))$ with $f_0(z)$. 
It is possible to verify that the integers $\alpha_j$, $1\leq j\leq n$, only depend on $f_0$ and the point $p\in C$, and neither on the chosen local chart $z$, nor the vectorial function $v(z)$,  nor the automorphism $\varphi$.

\begin{proposition}\label{pr:ind-ram} 
In the above, 
$$
b_l(p):=\alpha_{l+1}, \quad 0\leq l\leq n-1.
$$
\end{proposition}

In particular, as $C$ is a smooth curve, $\alpha_1=0$.

\begin{remark}
\label{re:gap-W}
If $C$ is a non-hyperelliptic curve of genus  $g \geq 3$ and $C\hookrightarrow  {\mathbb{P}}^{g-1}$ 
is a canonical embedding, then 
$$
a_i=i+\sum_{j=1}^{i-1}\alpha_j, \quad 1\leq i\leq g,
$$
are the gap values of $p$. In other words,  $a_{1},\ldots,a_{g}$ are the only $g$ integers where  {\bf  there does not exist}
 a meromorphic function of  $C$ with a pole of order $a_i$ in the point $p$ and holomorphic on $C-\{p\}$.
\end{remark}

%%%%%%%%%%%%%%%%%%%%%%%%
\subsection{The natural regular branched coverings of the generalized Fermat curves}
\label{ss:natural-branched-covering}

Let us start with the following general fact (which will be used later) regarding the generalized Fermat curves. 

\begin{remark}\label{re:rest}
Let us consider a generalized Fermat curve of type $(k,n)$ with its respective standard embedding 
$\xymatrix{\C{k}{n-2}{\lambda} \ar@^{(->}[r] & {\mathbb{P}}^{n} }$, 
and a rational map (denoted by the dotted line)  $\psi: {\mathbb{P}}^{n} \dashrightarrow {\mathbb{P}}^{m}$.
 If there exists 
 $0 \leq i<j\leq n$  such that the linear projective space $L_{(i,j)}:=\{[x_0:\cdots:x_n]|x_i=x_j=0\}$ contains the
locus of indeterminacy of the rational map, then  (as the intersection $\C{k}{n-2}{\lambda}\cap L_{(i,j)}$ is the empty set)
 the restriction of $\psi$  to $\C{k}{n-2}{\lambda}$, $$\xymatrix{ \C{k}{n-2}{\lambda} \ar@^{(->}[r] \ar@/^1pc/[rr]&{\mathbb{P}}^{n}\ar@{-->}[r]&{\mathbb{P}}^m},$$
is a well defined morphism.  
\end{remark}

Let us consider the rational  map  
$$\pi: {\mathbb{P}}^{n} \dashrightarrow {\mathbb{P}}^{n-1}: [x_0:\cdots:x_n] \mapsto [x_0:\cdots:x_{n-1}].$$

As the locus of indeterminacy of $\pi$ is the point
$[0:\cdots:0:1]\in L_{(0,1)} \subset {\mathbb{P}}^{n}$, then (by Remark \ref{re:rest}) the restriction of $\pi$ to $\C{k}{n-2}{\lambda}$
is a well defined morphism.  Additionally, $\pi(\C{k}{n-2}{\lambda}))=\C{k}{n-3}{\lambda}$ (when $n=3$, 
this image curve is the classic Fermat curve $C^k$).
By abuse of notation we  denote by $\pi: \C{k}{n-2}{\lambda} \to \C{k}{n-3}{\lambda}$ the restriction of
$\pi$ to $\C{k}{n-2}{\lambda}$. Note that the restricted map  $\pi$,  is a regular branched covering
whose deck covering group is the cyclic group generated the automorphism 
$$\varphi_n([x_0:\cdots:x_n]):=[x_0:\cdots :\mathrm{e}^{\frac{2\pi i}{k}}x_n],$$
so its  branch values  are the images of the $k^{n-1}$ fixed points of $\varphi_{n}$.
This map $\pi$ is compatible with the embeddings of the Fermat curves into projective spaces. 
The quotient group $H/\langle \varphi_{n}\rangle \cong {\mathbb Z}_{k}^{n-1}$ is the generalized Fermat
group of type $(k,n-1)$ of  $\C{k}{n-3}{\lambda}$. The $k^{n-1}$ fixed points of each 
$\varphi_{j}$ ($j=0,\ldots,n$) are permuted under the action of $\varphi_{n}$ (fixing none of them), and 
this set is projected under $\pi$ to the set of $k^{n-2}$ fixed points of the quotient class of $\varphi_{j}$. 
This map will be used to obtain an inductive approach to our problem.

%%%%%%%%%%%%%%%%%%%%%%%%%%%
%%%%%%%%%%%%%%%%%%%%%%%%%%%
\section{Hyperosculating points of generalized Fermat curves}
In this section we restrict our attention to generalized Fermat curves.
Keeping  the notations fixed at the beginning of the Section \ref{sec:pre-mod-esp}, recall that $\C{k}{n-2}{\lambda}$ is a generalized Fermat curve of the type $(k,n)$, and that  $F:={\mathrm{Fix}} H$.   
 
If $p \in F$, then, by using linear substitutions in the system of equations, we may assume that $p\in {\mathrm{Fix}}(\varphi_1)$, that is, 
$$p:=[1:0:\rho_1:\rho_2:\cdots:\rho_{n-1}],$$
where $\rho_i^{k}=-{\lambda}_{i-1}$, $1\leq i\leq n-1$ (with ${\lambda}_{0}=1$).

Let $f_0:\C{k}{n-2}{\lambda} \rightarrow {\mathbb{P}}^n$ be the standard embedding defined by the inclusion $\C{k}{n-2}{\lambda} \subset {\mathbb{P}}^n$.

The next theorem describes the hyperosculating points of $\C{k}{n-2}{\lambda}$ and the ramification indexes.

\begin{theorem}[\cite{HKLP16}]\label{th:hyperosc}
\mbox{}
\begin{enumerate}
\item The set of hyperosculating points of $\C{k}{n-2}{\lambda}$ is $F$. 
\item  If $p\in F$, then $b_1(p)=k-2$ and $b_l(p)=k-1$,  $l \in \{2,\ldots, n-1\}$.
\end{enumerate}
\end{theorem}
 
A consequence of the above is the following.

\begin{corollary}\label{co:hyperosc}
Let $z$ be  a local chart of  $\C{k}{n-2}{\lambda}$  around the point  $p\in \C{k}{n-2}{\lambda}$. Then the normal form of 
$f_0$ in  $z(p):=0$  is given as follows.
\begin{enumerate}
\item If $p\in F$, then 
$$f_0(z)=[1:z: g_0(z^{k}):g_1(z^{k}):\cdots :g_{i}(z^{k}):\cdots : g_{n-1}(z^{k})],$$
where the  $g_i$ are holomorphic functions such that $g_i(z)=z^{i+1}+\cdots+\cdots$.

\item If $p\not \in F$, then  
$$f_0(z)=[1:z: z^2+\cdots :\cdots : z^{(n-1)}+\cdots].$$
\end{enumerate}
\end{corollary}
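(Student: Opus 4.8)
My plan is to deduce Corollary~\ref{co:hyperosc} from Theorem~\ref{th:hyperosc} (which I take as available) by simply reading off the normal form $(\star)$. First, fix $p\in S$ and a local chart $z$ with $z(p)=0$. By Proposition~\ref{pr:ind-ram} the ramification indices are $b_l(p)=\alpha_{l+1}$, and $\alpha_1=b_0(p)=0$ because $S$ is smooth; hence the leading exponents of $f_0$ in $(\star)$ are $e_0=0$ and $e_j=j+\sum_{i=1}^{j}\alpha_i=j+\sum_{l=1}^{j-1}b_l(p)$ for $1\le j\le n$. If $p\in F$, Theorem~\ref{th:hyperosc}(2) gives $b_1(p)=k-2$ and $b_l(p)=k-1$ for $2\le l\le n-1$, so $e_j=j+(k-2)+(j-2)(k-1)=(j-1)k$ for $2\le j\le n$; that is, the exponent sequence of $(\star)$ is $0,1,k,2k,\dots,(n-1)k$. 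On the other hand Lemma~\ref{le:for-nor} writes the normal form as $[1:z:g_0(z^k):g_1(z^k):\cdots:g_{n-2}(z^k)]$ with $g_i(z)=z^{l_i}+\cdots$, so the coordinate $g_i(z^k)$ has leading exponent $kl_i$, which must equal $e_{i+2}=k(i+1)$. Thus $l_i=i+1$, i.e.\ $g_i(z)=z^{i+1}+\cdots$, which is case~(1).

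If instead $p\notin F$, Theorem~\ref{th:hyperosc}(1) tells us $p$ is not a hyperosculating point, so by the Proposition characterising such points ($p$ is hyperosculating if and only if $\sum_{l=1}^{n-1}b_l(p)\ge 1$) and the non-negativity of the $b_l(p)$, all $b_l(p)$ vanish; hence $\alpha_j=0$ for $1\le j\le n$, so $e_j=j$ for all $j$ and $(\star)$ reads $f_0(z)=[1:z:z^2+\cdots:\cdots:z^n+\cdots]$, which is case~(2). This completes the Corollary.

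For context, here is the route I would take for the underlying Theorem~\ref{th:hyperosc}, where the work actually lies. For $p\in F$ I would first move $p$ into standard position via Remark~\ref{re:inv-aut-pn}; Lemma~\ref{le:for-nor} then produces the normal form up to the ambiguity of the integers $1=l_0<2=l_1<l_2<\cdots<l_{n-2}$, and reading off $(\star)$ gives $b_1(p)=k-2$, $b_2(p)=k-1$ and $b_l(p)=k(l_{l-1}-l_{l-2})-1\ge k-1$ for $2\le l\le n-1$ (strict monotonicity of the $l_i$); in particular $b_2(p)\ge 1$, so every $p\in F$ is hyperosculating. To turn these inequalities into equalities, and to rule out hyperosculating points outside $F$, I would match this with a global Pl\"ucker count: Riemann--Hurwitz for $S\to S/H=\PP^1$ gives $2g-2=k^{n-1}\bigl((n-1)k-(n+1)\bigr)$; $S$ being a complete intersection of $n-1$ hypersurfaces of degree $k$ gives $d_0=\deg S=k^{n-1}$; the $l=0$ relation $d_1-2d_0=2g-2$ (valid since $f_0$ is an embedding, so $b_0=0$) gives $d_1=2d_0+2g-2$; and $|F|=(n+1)k^{n-1}$. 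Weighting the relations of Theorem~\ref{th:plucker} by $n-l$ and telescoping away the intermediate $d_l$ yields $\sum_{l=1}^{n-1}(n-l)b_l=(n+1)d_0+n(n+1)(g-1)$, which with the above values equals $\tfrac12(n+1)(n-1)k^{n-1}(nk-n-2)$. But $\sum_{l=1}^{n-1}(n-l)b_l=\sum_{p}\sum_{l}(n-l)b_l(p)$ is, by the local bounds and $b_l(p)\ge 0$ for $p\notin F$, at least $|F|\bigl((n-1)(k-2)+(k-1)\binom{n-1}{2}\bigr)$, and this lower bound already coincides with the value just computed. Hence all the inequalities are equalities: $b_l(p)=k-1$ for every $p\in F$ and $2\le l\le n-1$ (statement~(2)), and $b_l(p)=0$ for every $p\notin F$, so such $p$ is not hyperosculating (completing statement~(1)).

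The one genuine obstacle is the Theorem, not the Corollary; within the Theorem the decisive observation is that Lemma~\ref{le:for-nor} together with the \emph{exact} total ramification supplied by the Pl\"ucker formulas is rigid enough to determine all the $b_l(p)$ at once, so no delicate direct computation at a non-fixed point is ever needed. The remaining technical work --- the summation-by-parts identity and the evaluation of $g$, $d_0$, $d_1$ and $|F|$ --- is routine bookkeeping.
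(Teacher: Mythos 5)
Your derivation is correct and follows the paper's intended route: the paper offers no separate proof beyond saying the corollary is ``directly derived'' from Theorem~\ref{th:hyperosc}, and your reading of the exponents $e_j=j+\sum_{l=1}^{j-1}b_l(p)$ via Proposition~\ref{pr:ind-ram} is exactly that derivation, while your sketch of the theorem itself (local lower bounds from Lemma~\ref{le:for-nor} matched against the $(n-l)$-weighted Pl\"ucker count, which closes because the two totals coincide) is the same argument the paper gives. The only remark worth adding is that your (correct) exponent count exposes two index typos in the printed statement: case (1) should end at $g_{n-2}(z^{k})$ and case (2) at $z^{n}+\cdots$, so that there are $n+1$ homogeneous coordinates in $\PP^n$.
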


%%%%%%%%%%%%%%%%%%%%%%%%%%%%%
\subsection{The Weierstrass  weight of the hyperosculating points of generalized Fermat curves}
\label{ssec:aut-hiposc}
Let us keep the notations from the previous sections.  
Given a curve $C$, the Weierstrass  weight  of $p\in C$ is 
$$w(p):=\sum_{i=1}^{g}(a_i-i),$$
where the $a_i$ are the Weierstrass gaps of $p$ (see  Remark \ref{re:gap-W}).

In general, the computation of $w(p)$, when $p$ is a Weierstrass point (which is to say $w(p)>0$), 
is not an easy problem. Theorem  \ref{th:hyperosc} asserts that the hyperosculating points of the 
generalized Fermat curve  $\C{k}{n-2}{\lambda}$ are exactly the points in the set $F$.
We will determine an optimal lower bound for the weight of these points and observe that 
this bound is sharp in a  dense open set of the moduli space of the generalized Fermat curves of type $(k,n)$.\\

First, we fix some notations.
Let $I(\C{k}{n-2}{\lambda}):=\langle x_0^k+x_1^k+x_2^k,...,\lambda_{n-2}x_0^k+x_1^k+x_{n}^k\rangle$
be the homogeneous prime ideal 
of  $\C{k}{n-2}{\lambda}$ in ${\mathbb{C}}[x_0,...,x_n]$, let $\Gamma(\C{k}{n-2}{\lambda}):={\mathbb{C}}[x_0,...,x_n]/I(\C{k}{n-2}{\lambda})$ be the homogeneous coordinate ring of $\C{k}{n-2}{\lambda}$. and let $\mathcal{O}_{{\mathbb{P}}^n}(m)$, $m\in {\mathbb{Z}}$, be the twisting sheaf; for $m\geq 0$ the sheaf  $\mathcal{O}_{{\mathbb{P}}^n}(m)$ is generated by the forms  of degree $m$ of ${\mathbb{C}}[x_0,...,x_n]$.

Let us consider the  sheaf over $\C{k}{n-1}{\lambda}$ given by
$$\mathcal{O}_{\C{k}{n-2}{\lambda}}(m):=f_0^{\star}\mathcal{O}_{{\mathbb{P}}^n}(m),$$
where $f_0:\C{k}{n-2}{\lambda}\hookrightarrow {\mathbb{P}}^n$ is the natural embedding of the generalized Fermat curves.
To simplify the notation, when it is clear that we are referring to the sheaf $\mathcal{O}_{\C{k}{n-2}{\lambda}}(m)$  we will simply use the notation
$\mathcal{O}(m)$.  Observe that $H^{0}(\C{k}{n-2}{\lambda},\mathcal{O}(m))=\Gamma(\C{k}{n-2}{\lambda})_m$,
where  $\Gamma(\C{k}{n-2}{\lambda})_m$ are the forms of degree  $m$ of $\Gamma(\C{k}{n-2}{\lambda})$.

As  $I:=I(\C{k}{n-2}{\lambda})$ is a homogeneous prime ideal, we have that 
 $I=\bigoplus_{m\geq 0}I_m$ and $\Gamma(\C{k}{n-2}{\lambda})= \bigoplus_{m\geq 0}\mathbb{C}[x_0,...,x_n]_m/I_m$,
where $I_m$ are the forms of degree $m$ of $I$ (observe that $I_0=\cdots =I_{k-1}=0$). 
In particular we have a surjective linear transformation 
$${\mathbb{C}}[x_0,...,x_n]_m\rightarrow \Gamma(\C{k}{n-2}{\lambda})_m=
 \mathbb{C}[x_0,...,x_n]_m/I_m  \hspace{0.5cm} (\star) $$

If ${\mathbb{P}}(\Gamma(\C{k}{n-2}{\lambda})_m)$ denotes the projective space associated to the vector
space $\Gamma(\C{k}{n-2}{\lambda})_m$, then the  linear transformation $(\star)$  induces an embedding
$${\mathbb{P}}(\Gamma(\C{k}{n-2}{\lambda})_m)\subset {\mathbb{P}}({\mathbb{C}}[x_0,...,x_n]_m)\cong {\mathbb{P}}^{d(m)},$$ 
where $d(m)=\binom{n+m}{m}-1$. In fact we can see $\mathbb{P}(\Gamma(\C{k}{n-2}{\lambda})_m)$
as a linear projective space of ${\mathbb{P}}^{d(m)}$.

If $C$ is an algebraic curve, then we denote by $\omega_C$ its respective canonical sheaf. 
When given the context it is clear that we are referring to a curve $C$, we use the notation $\omega$ in place of $\omega_C$. 

As $\omega\cong \mathcal{O}_{\C{k}{n-2}{\lambda}}(r)$, $r:=(n-1)(k-1)-2$, (\cite[page 188]{Har77}), the Veronesse map of degree $r$,
$\nu_r:{\mathbb{P}}^n\rightarrow {\mathbb{P}}^{d(r)}$, permits us to obtain the canonical embedding  $f_c$. 
  
\begin{center}
$\xymatrix{\C{k}{n-2}{\lambda} \ar@^{(->}[rr]_{f_0} \ar@/^1pc/[rrr]^{f_c:=\nu_r\circ f_0}&
& {\mathbb{P}}^{n}\ar@{-->}[r] \ar@/_1pc/[rr]_{\nu_r} &{\mathbb{P}}(\Gamma(\C{k}{n-2}{\lambda})_r) \ar@^{(->}[r]& {\mathbb{P}}^{d(r)}. }$
\end{center}
 
\begin{remark}
To define the rational maps 
$\xymatrix{{\mathbb{P}}^{n}\ar@{-->}[r]  &{\mathbb{P}}(\Gamma(\C{k}{n-2}{\lambda})_r)}$
it is necessary to fix a basis $\mathcal{B}$ of the vector space $\Gamma(\C{k}{n-2}{\lambda})_r)$.  Observe that for $(k,n)\neq(2,5)$ the rational map is a well defined morphism. 
As for $(k,n)\neq(2,5)$ we have that $r\neq k$, there exists a basis $\mathcal{B}$ of  $\Gamma(\C{k}{n-2}{\lambda})_r)$ such that $x_0^r,...,x_n^r\in \mathcal{B}$.  In the case $(k,n)=(2,5)$ we can suppose that $x_0^2, x_1^2 \in \mathcal{B}$.
\end{remark}

The normal form of $f_0$ in $p\in F$ (Corollary \ref{co:hyperosc}), will provide information about the normal form
 of canonical embedding $f_c$ in $p\in F$. More precisely, as 
 all elements of $p(x_0,x_1,...,x_n)\in \Gamma(\C{k}{n-2}{\lambda})_m$ 
 can be written uniquely in the form
 $$p(x_0,x_1,...,x_n)=\sum_{j=1}^{k-1}x_n^{j}q_j(x_0,...,x_{n-1}),$$
 where $q_j(x_0,...,x_{n-1})\in  \Gamma(\C{k}{n-2}{\lambda})_{m-j}$,
 it follows that the vector space 
 $\Gamma(\C{k}{n-2}{\lambda} )_m$ 
  has the following decomposition
 $$\displaystyle \Gamma(\C{k}{n-2}{\lambda})_m:=\bigoplus_{j=0}^{k-1} x_n^{j}Q(m-j),$$
 where $Q(m-j)\subset \Gamma(\C{k}{n-2}{\lambda})_{m-j}$.\\

Let us choose a basis $v_0(x_0,...,x_{n-1}),\cdots,v_{t_j}(x_0,...,x_{n-1})$  of the vector space  $Q(r-j)$, where $t_j:=\dim_{\mathbb{C}}Q(r-j)-1$, and $0\leq j\leq r-1$. Therefore we can construct a rational map:
$$\xymatrix{{\mathbb{P}}^{n}\ar@{-->}[r] &{\mathbb{P}}(Q(r-j))}:[x_0:x_1:\cdots x_n]\mapsto [v_0:v_1:\cdots v_{t_j}].$$
	
We  may assume that $v_0(x_0,...,x_{n-1}):=x_0^{r-j}$, and $v_1(x_0,...,x_{n-1}):=x_1^{r-j}$. 
 In fact, if $x_0^{r-j}, x_1^{r-j}$  are linearly dependent in $Q(r-j)$, 
then there exists $a\in \mathbb{C}$ such that $x_0^{r-j}+ax_1^{r-j}\in I(\C{k}{n-2}{\lambda})$. 
 As  $I(\C{k}{n-2}{\lambda})$ is a prime ideal,  there exists  $a'\in \mathbb{C}$ such that $x_0-a'x_1\in I(\C{k}{n-2}{\lambda})$,
  a contradiction since $I_1=0$. 
 In thus way, we observe that  the locus of indeterminacy of the  above rational map  is contained in the linear space $L_{(0,1)}:=\{[x_0:\cdots:x_n]|x_i=x_1=0\}$. 
   By Remark  \ref{re:rest}, its restriction to $\C{k}{n-2}{\lambda}$ 
is a well defined morphism, which we denote by the symbol $k_j$.
In this manner we have constructed the following diagram:	

$$\xymatrix{ \C{k}{n-2}{\lambda} \ar@^{(->}[r] \ar@/^1pc/[rr]^{k_j}&{\mathbb{P}}^{n}\ar@{-->}[r]&{\mathbb{P}}(Q(r-j))},$$
 
The  morphisms  $k_j$ allow us to study the morphism $f_c$ at $p\in F$.
 
Now, as seen in Section \ref{ss:natural-branched-covering}, if we consider restriction of the rational map  
$$\pi: {\mathbb{P}}^{n} \dashrightarrow {\mathbb{P}}^{n-1}: [x_0:\cdots:x_n] \mapsto [x_0:\cdots:x_{n-1}]$$
to the generalized Fermat curve $\C{k}{n-2}{\lambda}$, then we obtain the morphism  (for $n=3 $, 
the curve $\C{k}{n-3}{\lambda}$ is the classic Fermat curve $C^k$)
$$\pi:\C{k}{n-2}{\lambda}\rightarrow \C{k}{n-3}{\lambda};[x_0:\cdots x_n]\mapsto [x_0:\cdots:x_{n-1}].$$

Observe that $\pi$ is a Galois branched covering of degree $k$ defined by quotienting 
by the cyclic group of order $k$ generated by the automorphism 
$$\varphi_n([x_0:\cdots:x_n]):=[x_0:\cdots:x_{n-1}:w_kx_n], \quad w_k:={\mathrm{e}}^{\frac{2\pi i}{k}}.$$

In particular, the morphism $\pi$ defined in the local chart $z$, around the point $p$, is of the form  
$$\zeta:=\pi(z)=z^{k}.$$

By the construction, we may see that $\pi$ factorizes the morphism $k_j$, and we obtain the following diagram

\begin{equation}\label{diagrama2}
\xymatrix{ \C{k}{n-2}{\lambda} \ar[d]^{\pi}  \ar@{^{(}->}[rr] \ar@/^1pc/[rrrr]^{k_j}  & & 
{\mathbb{P}}^{n}\ar@{-->}[d]^{\pi} 
\ar@{-->}[rr]&& {\mathbb{P}}(Q(r-j)) \\
\C{k}{n-3}{\lambda} \ar@{^{(}->}[rr]\ar@/_3pc/[rrrru]_{g_j} & &{\mathbb{P}}^{n-1}\ar@{-->}[rru]&&}
\end{equation}

Analogous to what we have previously seen, the locus of indeterminacy (this can be empty set) of the rational map
  $\xymatrix{{\mathbb{P}}^{n-1}\ar@{-->}[r] &{\mathbb{P}}(Q(r-j))}$
 is contained in $L_{(0,1)}:=\{[x_0:\cdots:x_{n-1}
|x_0=x_1=0\}$,  so $g_j$ is a well defined morphism.
 To study the morphism $k_j$ we strongly use the morphism $g_j$.
 
The following result gives us the dimension of the $Q(r-j)$.  

\begin{proposition} \label{pr:dim-s} 
If $s(r-j):= 
 \dim_{{\mathbb{C}}}Q(r-j)$, where  $r:=(n-1)(k-1)-2$ and  $0\leq j\leq k-1$, then
$$s(r-j)=\frac{1}{2} k^{n-2} ((n(k-1) -2 - 2 j)+ \delta_{k-1, j},$$
 where $\delta_{k-1, j}$ is the Kronecker delta.
 \end{proposition}
\begin{proof} Let us fix a generalized Fermat curve $\C{k}{n-2}{\lambda}$, and consider the generalized Fermat curve $S'=\C{k}{n-3}{\lambda}$ of type $(n-1,k)$. We note that 
$$H^{0}(S',\mathcal{O}_{S'}(r-j))\cong Q(r-j).$$

For $m\in{\mathbb{Z}}$,  let $h'(m)$ denote the dimension of the global section space $H^{0}(S',\mathcal{O}_{S'}(m))$ over ${\mathbb{C}}$.
Recall that $\omega_{S'}\cong \mathcal{O}_{S'}((n-2)(k-1)-2)$. By the Riemann-Roch Formula, 
$$h'(-k+1+j))-h'(r'-(-k+1+j))=(-k+1+j)k^{n-2}-\left(\frac{k^{n-2} r' +2}{2}\right)+1,$$
where $r':=(n-2)(k-1)-2$. Since $s(r-j)= h'(r'-(-k+1+j))$ and  $h'(-k+1+j))=\delta_{k-1, j}$, we obtain the desired equality.
\end{proof}

Next, we estimate, from below, the weight of the points in $F$.

\begin{theorem}\label{th:w-peso}
Let $p \in F$ and let $w(p)$ be its weight. If $n\geq 3$, then
$$\widehat{w}(p):=\frac{1}{24}(k-1) (k^{n-1}-2) (k^{n} + k^{n-1}-12)\leq w(p).$$
Moreover, there exists a dense open set  of $\mathcal{F}(k,n)$ where equality holds.  
\end{theorem}

\begin{remark}\label{ej:cla-fer}
In the case of a generalized Fermat curve of the type  $(k,2)$, $k\geq 4$, (a classic Fermat curve), 
it is known that (see \cite{Has50,Roh82,Wat99}) the weight of a point $p\in F$ is
$w(p)=\frac{1}{24}(k-1)(k-2)(k-3)(k+4)$, which shows that equality in Theorem \ref{th:w-peso} holds for the classic case.
\end{remark}
 
Before to go in to the proof of Theorem \ref{th:w-peso}, we discuss two examples. In the first one we observe that the equality, in the previous theorem, holds for a generalized Fermat curve of type $(2,4)$ and in the second one we provide examples on which the bound is not sharp.
 
\begin{example}
\label{ej:(2,4)}
Let  $C$ be a generalized Fermat curve of the type $(k,n)=(2,4)$, and  $p$ be in $F$. As $H^{0}(C,\omega_C)\cong H^{0}(C,\mathcal{O}_C(1))={\mathbb{C}}[x_0,x_1,x_2,x_3,x_4]_1$  (forms of degree $1$ of the polynomial ring  ${\mathbb{C}}[x_0,x_1,x_2,x_3]$),
It follows that the map $\xymatrix{ C \ar[r]^{f_c}&{\mathbb{P}}^{4}}$ is the canonical embedding.
Using the normal form of $f_{c}$ in $p$, we obtain that the gap values of $p$ are
 $a_1=1$, $a_2=2$, $a_3=3$, $a_4=5$, $a_5=7$. In this way,
 $\hat{w}(p)=w(p)=3$. Observe that, by virtue of Theorem \ref{th:hyperosc}, in this case the Weierstrass points are exactly the points of hyperosculation $F$.  
 \end{example}

\begin{example}\label{ej:bound}
Consider the following generalized Fermat curve of the type $(5,3)$  
\begin{center}
$C^5(-1):=\left \{ \begin{array}{rcc}
x_0^5+x_1^5+x_2^5&=&0\\
-x_0^5+x_1^5+x_3^5&=&0
\end{array}\right .$
\end{center}
and $p=[1:1:-\sqrt[5]{2}:0]$. 
In this case, $\hat{w}(p)<w(p)$.  In Section \ref{Sec:finalejemplo} we provide a proof of this fact.  
\end{example}

%%%%%%%%%%%%%%%%%%%%%%%%%%%%
\subsection{Proof of Theorem  \ref{th:w-peso}}
Because of Example \ref{ej:cla-fer}, we only need to consider $n\geq 3$.  
 Let us consider the generalized Fermat curve $\C{k}{n-2}{\lambda}$, and $p\in F$.  Without loss of generality, 
 we can suppose that 
$p:=[1:\rho_1:\rho_2:\cdots :\rho_{n-1}:0]$, where $\rho_i^k=-\lambda_{n-2}-\lambda_{i-2}$,  $\lambda_{-1}=0$, and  $\lambda_{0}=1$
(it suffices to use the linear substitutions in the system of equations $\C{k}{n-2}{\lambda}$).
Also, let us recall the commutative diagram \eqref{diagrama2}.

The set $D(\lambda_1,...,\lambda_{n-2})=\pi({\mathrm{Fix}}(\varphi_n))\subset \C{k}{n-3}{\lambda}$, is the set of branch values of the regular branched covering map $\pi:\C{k}{n-2}{\lambda} \to \C{k}{n-3}{\lambda}$ whose deck group is $\langle \phi_{n} \rangle \cong {\mathbb Z}_{k}$. Define the sets
$$\mho_j:=\{(\lambda_1,...\lambda_{n-2})\in\mathcal{M}_{0, n+1} \mid D(\lambda_1,...,\lambda_{n-2})
 \cap \HC{j}{k}{n-3}{\lambda}=\emptyset \},$$
$$\mho:=\cap_{j=0}^{k-1}\mho_{j},$$
where $\HC{j}{k}{n-3}{\lambda}$ is the set of hyperosculating points of the map 
$$g_j:\C{k}{n-3}{\lambda}\hookrightarrow {\mathbb{P}}(Q(r-j)).$$

\begin{lemma}\label{le:open-zariski}
For each $j \in \{0,\ldots,k-1\}$, the set  $\mho_j$ is a  dense open set of  $\mathcal{M}_{0,n}$; in particular, $\mho$ is also a non-empty dense open set.
\end{lemma}
\begin{proof}
Let us consider the set 
$$\mho'_j:=\{(\lambda_1,...\lambda_{n-2})\in\mathcal{P}_n \mid D(\lambda_1,...,
\lambda_{n-2})\cap \HC{j}{k}{n-3}{\lambda}=\emptyset \}.$$  

As  $\Pi: \mho'_j\rightarrow \mho_j$ is an open  surjective map 
(see section  \ref{sec:pre-mod-esp}), it suffices to prove that $\mho'_j$ is an open set in the domain ${\mathcal P}_{n} \subset {\mathbb C}^{n-2}$.

In the following, when we use the notation $\hat{\lambda}_i$ we suppose that the value of $\lambda_i$ is fixed.

Let us first verify that $\mho'_j$ is non-empty.  Fix a point $(\hat{\lambda}_1,...,\hat{\lambda}_{n-3}) 
\in {\mathcal P}_{n-1}$ and consider the slice in ${\mathcal P}_{n}$ given by the points of the form
$(\hat{\lambda}_1,...,\hat{\lambda}_{n-3}, \lambda_{n-2}) \in {\mathcal P}_{n}$. We proceed to see that 
in such a slice only finitely many points cannot belong to $\mho_{j}$. For it, we only need to observe that 
$$\{q:=\pi(p)\in \C{k}{n-3}{\hat{\lambda}}\mid \lambda_{n-2}\in {\mathbb{C}}-\{0,1, \hat{\lambda}_1,...,\hat{\lambda}_{n-3} \},\; p\in {\mathrm{Fix}}{(\varphi_n)}\}$$
 has infinitely many points and $\HC{j}{k}{n-3}{\hat{\lambda}}$ is a finite set.

Now, we proceed to check that $\mho'_{j}$ is open.  
Let us fix $(\hat{\lambda}_0,...,\hat{\lambda}_{n-2}) \in \mho'_{j}$. 
Let $q'\in D(\hat{\lambda}_0,...,\hat{\lambda}_{n-3})$ be such that $q'\not\in \HC{j}{k}{n-3}{\hat{\lambda}}$ and 
$p'\in {\mathrm{Fix}}{(\varphi_n)}$ such that $\pi(p')=q'$. 

For each $(\lambda_{1},\ldots,\lambda_{n-2}) \in {\mathcal P}_{n}$, let us consider a point 
$p\in {\mathrm{Fix}}{(\varphi_{n})}\subset \C{k}{n-2}{\lambda}$ and set $q=\pi(p)$. 
Observe that there are no technical problems in supposing that $(p,q)=(p',q')$ 
when  $(\lambda_{1},\cdots, \lambda_{n-2})=(\hat{\lambda}_{1},\cdots, \hat{\lambda}_{n-2})$. 
Recall that we may assume $p=[1:\rho_1:\cdots :\rho_{n-1}:0]$, where $\rho_i^k=-\lambda_{n-2}-\lambda_{i-2}$,  $\lambda_{-1}=0$ and $\lambda_{0}=1$.

If $z$ is a local chart around $p$, $z(p)=0$, then there exists a neighborhood $\Omega_p  \subset {\mathbb{C}}$ of $0$,  such that the map $f_0:\Omega_p \rightarrow \C{k}{n-2}{\lambda}\subset {\mathbb{P}}^{n}$, defined naturally by the embedding $\C{k}{n-2}{\lambda}\subset {\mathbb{P}}^{n}$, has the form
$$ \displaystyle f_0(z)=[1:h_1(z^{k}):h_2(z^{k}): \cdots: h_{n-1}(z^{k}):z],$$ 
where $h_i(0)=\rho_{i}$.

By the construction, there exists a local chart $\zeta$ of $\C{k}{n-3}{\lambda}$ around the point $q:=\pi(p)$, and a local parametrization 
$$\tilde{f}_0:\Omega'_q\subset {\mathbb{C}}\rightarrow \C{k}{n-3}{\lambda}\subset {\mathbb{P}}^{n-1}:\zeta\mapsto \tilde{f}_0(\zeta)=[1:h_1(\zeta):h_2(\zeta): \cdots: h_{n-1}(\zeta)].$$

Recall that the morphism $\pi$ is defined in local charts as $\zeta=\pi(z)=z^k$.  Additionally, 
if we consider a Taylor expansion of the $h_i(\zeta)$  we can observe that the coefficients are analytic  functions in the variables 
$\lambda_1,...,\lambda_{n-2}$.

Now let us consider the map $g_{j}:\C{k}{n-3}{\lambda}\rightarrow {\mathbb{P}}(Q(r-j))$. We note that, using the  local parametrization  of $\tilde{f}_0(\zeta)$ around the point $q$, a local parametrization 
 of   $g_{j}$ around the point $q$  can be found. Let  $g_{j}(\zeta):=[1:h'_1(\zeta):\cdots:h'_{t_j}(\zeta)]$  be  this 
 local parametrization of $g_{j}$,  where $t_j:=s(r-j)-1= 
 \dim_{{\mathbb{C}}}Q(r-j)-1$.
By construction the coefficients of $h'_i(\zeta)$ are analytic functions in the variables
$\lambda_1,...,\lambda_{n-2}$.

Given a formal series $l(\zeta)$, let $Tl$ be the vector column formed by all the coefficients of the formal series $l(\zeta)$
until the grade $t_j$.  We consider the following analytic function defined over $\mathcal{P}_n$ 

$$r(\lambda_1,...,\lambda_{n-2})=\det(T1,Th'_1,...,Th'_{t_j}).$$
	
Fixing the values  $\lambda_1,...,\lambda_{n-2}$, the point  $q\in \C{k}{n-3}{\lambda}$  is not a point of $\HC{j}{k}{n-3}{\lambda}$ if and only 
if $r(\lambda_1,...,\lambda_{n-2})$ is not zero.  As $q'$ is not a point of $\HC{j}{k}{n-3}{\hat{\lambda}}$,
we have that $r(\hat{\lambda}_1,...,\hat{\lambda}_{n-2})\neq 0$ (in particular, $r$ is not identically zero). 
The set $\mho'_j\subset \mathcal{P}_n$,  where the analytic function $r$ does not vanish, is the sought after open set. 
\end{proof}

Now, let us recall the map:
$$\xymatrix{ \C{k}{n-3}{\lambda} \ar[r] \ar@/^1pc/[rr]^{g_j}&{\mathbb{P}}^{n-1}\ar@{-->}[r]&{\mathbb{P}}(Q(r-j))}, \quad 0\leq j \leq r-1,$$
where $r=(n-1)(k-1)-2$, $Q(r-j)$  is the ${\mathbb{C}}$-vector space of the proposition \ref{pr:dim-s}.
Considering an automorphism of  $ {\mathbb{P}}(Q(r-j))$,  we can obtain the normal form of $g_j$ in $\pi(p)$:
 $$g_{j}(\zeta)=[1:g_{(1\; j)}(\zeta):g_{(2\;j)}(v):\cdots :g_{(t_j\;j)}(\zeta)],$$  
 where $t_j:=s(r-j)-1= \dim_{{\mathbb{C}}}Q(r-j)-1$,  and  the following inequalities are satisfied for each $0\leq j\leq r-1$:

$$(\star) \left \{\begin{array}{lll}
i\leq l_i:= {\mathrm{Ord}} g_{(i\; j)}(v),& \mbox{for all}& 1\leq i\leq t_j,\\
{\mathrm{Ord}} g_{(i\; j)}(v)< {\mathrm{Ord}} g_{(i+1\; j)}(v),& \mbox{for all} & i\geq 1.
\end{array} \right.$$

\begin{remark} \label{re:li=i}
For fixed $j \in \{0, \ldots, r-1\}$ the equality $l_i=i$, $1\leq i\leq t_j$, is valid when $\pi(p)$ is not a
 hyperosculation point of the  morphism  $\C{k}{n-3}{\lambda}\rightarrow {\mathbb{P}}(Q(r-j))$.  
\end{remark}

The idea of the rest of the proof is to construct the normal form of the canonical embedding $f_c:\C{k}{n-2}{\lambda}\rightarrow {\mathbb{P}}^{g-1}$ in the point $p$ using the functions $g_{(i\; j)}$. We divide the proof into two cases:  
\begin{enumerate}
\item[] {\bf Case 1:} $(n-1)(k-1)-2<k$.
\item[] {\bf Case 2:} $(n-1)(k-1)-2\geq k$.
\end{enumerate}

\smallskip
 \noindent
 {\bf Case 1:}
 Let us suppose that $(n-1)(k-1)-2<k$; that is $(k,n) \in \{(2,4),(3,3)\}$.
 In example \ref{ej:(2,4)} we have analyzed the case $(k,n)=(2,4)$. 
  Let us consider a generalized Fermat curve $C^3(\lambda)$ of type $(3,3)$ and the morphism $\pi:C^3(\lambda)\rightarrow C^3$ , previously defined.  
 Observe that  $H^{0}(C^3(\lambda):\omega)\cong H^{0}(C^3(\lambda):\mathcal{O}(2))={\mathbb{C}}[x_0,x_1,x_2,x_3]_2$ 
 (forms of degree $2$ of the polynomial ring  ${\mathbb{C}}[x_0,x_1,x_2,x_3]$).
 Then, the canonical embedding $f_c$ is given by the following composition:
 \begin{center}
 $\xymatrix{ C^3(\lambda) \ar[r]_{f}\ar@/^1pc/[rr]^{f_c} &{\mathbb{P}}^{3}\ar[r]_{\nu_{2}}&{\mathbb{P}}^9} $.
 \end{center}
Now let us consider the following morphism:  
\begin{center}
   $\xymatrix{ C^{3} \ar[r] \ar@/^1pc/[rr]^{g
   :=\nu_{2}\circ \tilde{f}_0}&{\mathbb{P}}^{2}\ar[r]&{\mathbb{P}}(Q(2))\cong {\mathbb{P}}^5}$, 
 \end{center}
 where $Q(2)$ is the ${\mathbb{C}}$-vector space of Proposition \ref{pr:dim-s}, 
 $\tilde{f}_0$ the natural embedding of $C^{3}$.  
	Let us consider the normal form of $\tilde{f}_0$, and of $g$:
 \begin{center}
   $\tilde{f}_{0}(\zeta)=[1:\tilde{f}_{(0\;1)}(\zeta):\tilde{f}_{(0\;2)}(\zeta)]$,  $g(\zeta)=[1:g_{1}(\zeta):g_{2}(\zeta):\cdots :g_{5}(\zeta)]$, 
 \end{center}

When $\pi(p)$ is not a hyperosculating point of  $\tilde{f}_0:C^3\rightarrow {\mathbb{P}}^2$, we obtain that  ${\mathrm{Ord}} \tilde{f}_{(0\;i)}(\zeta)=i$, $i\leq 2$. 
 We also obtain that ${\mathrm{Ord}} g_{i}(\zeta)=i$, $i\leq 4$ and  ${\mathrm{Ord}} g_{5}(\zeta)\geq 5$.  
 Observe that ${\mathrm{Ord}} g_{5}(\zeta)=5$ if  and only if $\pi(p)$ is not a hyperosculating point of the embedding $g:C^3\rightarrow {\mathbb{P}}^5$. As in this case, $\zeta:=\pi(z):=z^3$, if we define the functions 
\begin{center}
$h_{i}(z):=\left \{ \begin{array}{ccc} 
g_{i}(z^3), & \mbox{if} & i=1,\\
z\tilde{f}_{(0\;1)}(z^3), & \mbox{if} & i=2,\\
g_2(z^3), & \mbox{if} & i=3,\\
\tilde{f}_{(0\;1)}(z^3)\tilde{f}_{(0\;2)}(z^3), & \mbox{if} & i=4,\\
g_{i-2}(z^3), & \mbox{if} & 5\leq i\leq 7,
\end{array} \right.$
 \end{center}
 then the normal form of the canonical embedding $f_c$ is given as 
 $$f_{c}(z)=[1:z:z^2:h_1(z):\cdots  :h_{7}(z)].$$

 Additionally, we obtain 
 $$a_i=i, \; 1\leq i\leq 5, \; a_6=7,\; a_7=8,\; a_8=10,\; a_9 = 13, \; a_{10} \geq 16,$$
so $\hat{w}(p)=14\leq w(p)$. The equality is fulfilled when $\pi(p)$ is not a hyperosculating point of the embedding $g:C^3\rightarrow {\mathbb{P}}^5$.  The proof, for this situation, now follows from Lemma \ref{le:open-zariski}.

\smallskip
\noindent
{\bf Case 2:}
 In the rest of we assume that $(n-1)(k-1)-2\geq k$, and we define the functions
$$h_{(i\;j)}(z):=z^jg_{(i\;j)}(z^k),\; 0\leq j\leq k-1,\; 1\leq i \leq t_{j}.$$

 Considering an automorphism of ${\mathbb{P}}^{g-1}$, where $g$ is the genus of $S$, we can 
 suppose that the canonical embedding $f$ around the point $p$ is
$$\tiny f(z):= [1:\cdots:z^{k-1}: h_{(1\;0)}(z):\cdots:h_{(1\;k-1)}(z):h_{(2\;0)}(z):\cdots:
h_{(r\;k-2)}(z):$$$$:h_{(r+1\;0)}(z): \cdots:h_{(t_0\;0)}(z)],$$
where $r=t_{k-1}+1$.

Let $a_i$, $1\leq i\leq$ be  the gap values of  $p$. Since $ki+j\leq kl_i+j={\mathrm{Ord}} h_{(i\;j)}(z)$, 
$1\leq j \leq k-1$ and $1\leq i\leq t_j$, the following inequality is obtained
$$\sum_{j=0}^{k-1}\sum_{i=0}^{t_{j}}(ki+j+1)\leq \sum_{i=1}^{g}a_{i}.$$

By Proposition \ref{pr:dim-s}, we have
$$\sum_{j=0}^{k-1}\sum_{i=0}^{t_{j}}(ki+j+1)-\frac{g(g+1)}{2}=\frac{1}{24}
(k-1) (k^{n-1}-2) (k^{n} + k^{n-1}-12),$$
from which we obtain the inequality part of the theorem, as the weight of the point $p$ is
$$w(p):=\sum_{i=1}^{g}a_i-\frac{g(g+1)}{2}.$$

\begin{remark}
\label{re:weie-weight}
Observe that, for each $0\leq j\leq k-1$,  $l_i=i$, for each $1\leq i\leq t_j$, if and only if  $w(p)=\hat{w}(p)$.  Combining this with 
Remark \ref{re:li=i} we obtain that $w(p)=\hat{w}(p)$ if and only if  $\pi(p)$ is not a hyperosculating point of the morphism $\C{k}{n-3}{\lambda}\rightarrow {\mathbb{P}}(Q(r-j))$
for all  $0\leq j\leq k-1$.
\end{remark}

As the  set $\mho$ of the Lemma \ref{le:open-zariski} satisfies the condition of the previous remark, this finishes the proof of the last part of the theorem.

%%%%%%%%%%%%%%%%%%%%%%%
\subsection{On the Example \ref{ej:bound}}\label{Sec:finalejemplo}
Now we will verify that the bound of Theorem \ref{th:w-peso} is not achieved in Example \ref{ej:bound}.
Consider the morphism 
$\pi: C^5(-1)\rightarrow C^5\; :[x_0:x_1:x_2:x_3]\rightarrow [x_0:x_1:x_2].$
As seen  in the introduction $[1:1:-\sqrt[5]{2}]$ is a  Weierstrass point of $C^5$, and in particular is a hyperosculating point of $C^5\rightarrow {\mathbb{P}}(Q(6))$, 
(this morphism is the canonical embedding).  By virtue of Remark \ref{re:weie-weight} we obtain that $\hat{w}(p)<w(p)$.

%%%%%%%%%%%%%%%%%%%%%%%%%%%%
%%%%%%%%%%%%%%%%%%%%%%%%%%%%
\section*{Acknowledgments} 
The authors are very grateful to the referee whose valuable suggestions and comments helped to improve the content and clarity in the presentation of this paper.

%%%%%%%%%%%%%%%%%%%%%%%%%%%%
%%%%%%%%%%%%%%%%%%%%%%%%%%%%

\end{document}